\documentclass[11pt]{article}
\usepackage[T1]{fontenc}
\usepackage{amsmath,amsthm,mathtools}
\usepackage{libertine}
\usepackage[libertine]{newtxmath}
\usepackage{microtype}
\usepackage[letterpaper,margin=1in]{geometry}
\usepackage[colorlinks=true,linkcolor=blue,citecolor=blue,urlcolor=blue]{hyperref}
\usepackage[capitalise,nameinlink,noabbrev]{cleveref}
\Crefformat{equation}{#2(#1)#3}

\newtheorem{theorem}{Theorem}
\newtheorem{lemma}[theorem]{Lemma}
\title{A Tight Fractional Version of Generalized Tuza's Conjecture}
\author{Sichen Wang\\ \small Shenzhen MSU-BIT University\\ \small\texttt{wsc@smbu.edu.cn}}
\date{}
\hypersetup{pdftitle={A Tight Fractional Version of Generalized Tuza's Conjecture},pdfauthor={Sichen Wang}}

\begin{document}
\maketitle
\begin{abstract}
For an $r$-uniform hypergraph $H$, let $\nu(H)$ be the maximum number of edges no two of which share $r-1$ vertices, and $\tau(H)$ the minimum number of $(r-1)$-sets such that every edge contains one of them. Aharoni and Zerbib conjectured that $\tau(H)\le\lceil\frac{r+1}{2}\rceil\,\nu(H)$, which for $r=3$ generalizes Tuza's conjecture on triangles. We prove that the fractional relaxation $\tau^*(H)$ of $\tau(H)$ satisfies $\tau^*(H)\le\frac{r+1}{2}\,\nu(H)$. This constant is best possible for every $r$, and for $r\ge4$ it improves on the previous bound of roughly $3r/4$.
\end{abstract}

Tuza~\cite{Tuz90} conjectured that the triangles of every graph can be met by at most twice as many edges as the maximum number of edge-disjoint triangles. Aharoni and Zerbib~\cite{AZ20} proposed a generalization to hypergraphs. For $r\ge2$, an \emph{$r$-graph} $H$ is a set of $r$-element subsets, called edges, of a finite vertex set. An \emph{$(r-1)$-matching} of $H$ is a set of edges no two of which share $r-1$ vertices. Let $\nu(H)$ be the maximum size of an $(r-1)$-matching, and let $\tau(H)$ be the minimum number of $(r-1)$-sets of vertices such that every edge contains one of them; Aharoni and Zerbib write $\nu^{(r-1)}(H)$ and $\tau^{(r-1)}(H)$. An $(r-1)$-set lies in at most one edge of an $(r-1)$-matching, and by maximality every edge contains an $(r-1)$-subset of an edge of a maximum $(r-1)$-matching. Hence $\nu(H)\le\tau(H)\le r\,\nu(H)$. Aharoni and Zerbib conjectured that
\[
  \tau(H)\le\Big\lceil\frac{r+1}{2}\Big\rceil\,\nu(H).
\]
For $r=3$, applying this to the vertex sets of the triangles of a graph gives Tuza's conjecture. The constant $\lceil\frac{r+1}{2}\rceil$ would be best possible: the complete $r$-graph $K^{(r)}_{r+1}$ on $r+1$ vertices has $\nu=1$ and $\tau=\lceil\frac{r+1}{2}\rceil$~\cite{AZ20}, since any two of its edges share an $(r-1)$-set and each $(r-1)$-set lies in at most two of them.

The fractional relaxation $\tau^*(H)$ of $\tau(H)$ is the minimum total weight of an assignment of nonnegative weights to the $(r-1)$-sets such that the $(r-1)$-subsets of every edge receive total weight at least~$1$. A \emph{fractional $(r-1)$-matching} of $H$ assigns a weight $y_f\ge0$ to every edge $f$ so that the edges containing any $(r-1)$-set have total weight at most~$1$. By linear programming duality, $\tau^*(H)$ is also the maximum total weight of a fractional $(r-1)$-matching, so $\nu(H)\le\tau^*(H)\le\tau(H)$. The conjecture therefore has two fractional relaxations~\cite{AZ20}, $\tau^*(H)\le\lceil\frac{r+1}{2}\rceil\,\nu(H)$ and $\tau(H)\le\lceil\frac{r+1}{2}\rceil\,\tau^*(H)$. Krivelevich~\cite{Kri95} proved both for triangles, and Aharoni and Zerbib~\cite{AZ20} proved both for $r=3$, the first as a special case of their bound $\tau^*(H)\le(r-1)\,\nu(H)$ for $r\ge3$, derived from a theorem of F\"uredi~\cite{Fur81}. For every $r$, Guruswami and Sandeep~\cite{GS25} proved the second up to lower-order terms, with the constant $\frac r2+O(\sqrt{r\log r})$. For the first, Basit, McGinnis, Simmons, Sinnwell, and Zerbib~\cite{BMS22} improved the constant $r-1$ to $8/3$ for $r=4$ and to slightly below $3r/4$ for $r\ge5$, and noted that $\tau^*(K^{(r)}_{r+1})=\frac{r+1}{2}$, so no smaller constant is possible. We close this gap.

\begin{theorem}\label{thm:main}
Every $r$-graph $H$ with $r\ge2$ satisfies
\[
  \tau^*(H)\le\frac{r+1}{2}\,\nu(H),
\]
with equality for $H=K^{(r)}_{r+1}$.
\end{theorem}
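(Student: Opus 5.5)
The equality case needs only a short computation. In $K^{(r)}_{r+1}$ every $(r-1)$-set lies in exactly two edges, so $y\equiv\frac12$ is a fractional $(r-1)$-matching of weight $\frac{r+1}{2}$. Weight $\frac1r$ on each of the $\binom{r+1}{2}$ $(r-1)$-sets is a cover of the same weight $\binom{r+1}{2}/r=\frac{r+1}{2}$. Together with $\nu=1$ this gives equality.

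For the inequality I will work on the dual side. Fix a fractional $(r-1)$-matching $y$ and a maximum $(r-1)$-matching $M$, so $|M|=\nu(H)$. Every edge of $H$ shares an $(r-1)$-set with some $e\in M$. For $e\in M$ and $v\in e$ write $S_v=e\setminus\{v\}$. The goal is to distribute the weight of each edge $f$ among the pairs $(e,S_v)$ with $S_v\subset f$, and then show that each $e\in M$ receives total charge at most $\frac{r+1}{2}$.

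The charging rule is as follows.
\begin{itemize}
\item An edge $f\notin M$ that contains $(r-1)$-subsets of two or more distinct edges of $M$ (or two such subsets of one edge of $M$) sends $\frac12 y_f$ to each of two of these sets.
\item A \emph{private} edge $f$, meaning one that contains exactly one set $S_v$ from exactly one $e\in M$, sends all of $y_f$ to $S_v$.
\item Each $e\in M$ sends $\frac12 y_e$ to two chosen subsets $S_v$.
\end{itemize}
This mirrors the primal heuristic that weight $\frac12$ on every $(r-1)$-subset of edges of $M$ already covers every non-private edge, at cost $\frac r2\,\nu$. The whole difficulty is therefore the private edges, which must be handled with an extra budget of only $\frac12$ per edge of $M$.

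The structural input comes from maximality of $M$. Let $f_1=S_u\cup\{x\}$ and $f_2=S_w\cup\{x'\}$ be private edges at the same $e$ with $u\ne w$. They share $r-1$ vertices exactly when $x=x'$. If $x\ne x'$, then $(M\setminus\{e\})\cup\{f_1,f_2\}$ is a larger $(r-1)$-matching; privacy ensures neither $f_i$ conflicts with $M\setminus\{e\}$. Hence the private edges at $e$ fall into one of two cases.
\begin{itemize}
\item[(a)] They all pass through a single set $S_v$. Then their total $y$-weight, together with the halves received at $S_v$, is at most $1$ by the constraint at $S_v$.
\item[(b)] They all lie in the clique $e\cup\{x\}$ for a single outside vertex $x$. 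Then the edges of this $K^{(r)}_{r+1}$ carry total $y$-weight at most $\frac{r+1}{2}$, by the equality computation above applied to $y$ restricted to it.
\end{itemize}
Case (b) is where the constant $\frac{r+1}{2}$ comes from.

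I expect the main obstacle to be combining case (b) with the half-charges that non-private edges send to the sets $S_v\subset e$. Charge at $S_v$ is bounded by the constraint at $S_v$, so the $(r-1)$-sets containing $x$ must absorb part of the load. To handle this I would first try to strengthen the choice of $M$: among maximum matchings, pick one maximizing $\sum_{e\in M}\sum_{v}(\text{$y$-weight through }S_v)$ or a similar potential. I would then use a second exchange argument, swapping $e$ for an edge of the clique on $e\cup\{x\}$, to show that in case (b) the non-private edges through $S_v$ and the clique edges through $S_v$ together carry $y$-weight at most $1$. The bound for each $e$ then becomes a linear program on $K^{(r)}_{r+1}$ with these extra constraints, whose value I expect to be $\frac{r+1}{2}$.
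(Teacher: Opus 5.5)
\paragraph{Gap: case (b) is left open.} Your charging scheme is sound as far as it goes.
\begin{itemize}
\item An edge other than $e$ contains at most one $(r-1)$-subset of $e$, because two of them span $e$. So your parenthetical case is vacuous, and the charges add up to $y(H)$.
\item Your dichotomy (a)/(b) for private edges is correct.
\item Case (a) closes: the private set receives at most $1$, and each of the other $r-1$ sets receives at most $\frac12$.
\item The equality computation is fine.
\end{itemize}
The gap is case (b), which you leave open, and the plan you sketch would not close it. The constraint you propose to derive by exchange says that the non-private edges and the clique edges through $S_v$ carry weight at most $1$. That is just the LP constraint at $S_v$, since the only clique edges through $S_v$ are $e$ and $e-v+x$. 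With only the LP constraints, the local program has value $\frac{3r}{4}$, not $\frac{r+1}{2}$. Take $y_e=0$, $y_{e-u+x}=\frac12$ for every $u\in e$, and non-private weight $\frac12$ through each $S_v$. Then every constraint at the sets $S_v$ and $e\setminus\{u,w\}\cup\{x\}$ holds, and $e$ receives $\frac r2+\frac r4>\frac{r+1}{2}$ for $r\ge3$. Your potential $\sum_{e\in M}\sum_v(\text{weight through }S_v)$ does not help here. Every $(r-1)$-set involved has weight exactly $1$, so swapping $e$ for $e-u+x$ leaves the potential unchanged and gives no new constraint.

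\paragraph{The missing idea.} Choose $M$, among maximum $(r-1)$-matchings, to maximize $y(M)$. Let $P_e$ be the set of private edges at $e$. A private edge $f\in P_e$ shares $r-1$ vertices with no other edge of $M$. So $(M\setminus\{e\})\cup\{f\}$ is again maximum, which forces $y_f\le y_e$.

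In case (b), bound one private edge $e-u_0+x$ by $1-y_e$ using the constraint at $S_{u_0}$. Bound each of the at most $r-1$ others by $y_e$. This gives $y(P_e)\le 1+(r-2)y_e$, and the same bound holds in case (a), where $y(P_e)\le 1-y_e$.

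Summing the constraints at the $r$ sets $S_v$ bounds the non-private weight through them by $r-ry_e-y(P_e)$. Hence $e$ receives at most
$y_e+y(P_e)+\frac12\bigl(r-ry_e-y(P_e)\bigr)=\frac r2+\bigl(1-\frac r2\bigr)y_e+\frac12\,y(P_e)\le\frac{r+1}{2}$.

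With this change your scheme becomes a per-edge version of the paper's proof. The paper sums the constraints over all $(r-1)$-subsets of edges of $M$ and then applies the lemma $y(P_e)\le1+(r-2)y_e$. It proves that lemma without splitting into (a) and (b): any two private edges at $e$ share $r-1$ vertices, so they all have the form $e-u_0+v$ or $e-u+v_0$.
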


Thus the first relaxation holds for every $r$, with the conjectured constant for odd $r$ and a constant smaller by $\frac12$ for even $r$. \Cref{thm:main} is new for $r\ge4$.

To prove \Cref{thm:main}, let $y$ be a fractional $(r-1)$-matching of an $r$-graph $H$, let $\nu=\nu(H)$, and write $y(A)=\sum_{f\in A}y_f$ for $A\subseteq H$. By duality, it suffices to show that $y(H)\le\frac{r+1}{2}\nu$. Among all $(r-1)$-matchings of size $\nu$, choose $M$ maximizing $y(M)$, and let $\mathcal S$ be the set of $(r-1)$-subsets of edges of $M$, as in the bound $\tau\le r\nu$. Since no two edges of $M$ share $r-1$ vertices, each set in $\mathcal S$ lies in exactly one edge of $M$, and $|\mathcal S|=r\nu$. By maximality, every edge outside $M$ shares $r-1$ vertices with some edge of $M$. Let $P$ be the set of edges outside $M$ that share $r-1$ vertices with exactly one edge of $M$, and let $R$ be the set of the other edges outside $M$. For $e\in M$, let $P_e$ be the set of edges of $P$ that share $r-1$ vertices with $e$, so that $P$ is the disjoint union of the sets $P_e$. Every edge of $M$ contains $r$ sets of $\mathcal S$, every edge of $P$ at least one, and every edge of $R$ at least two, lying in different edges of $M$. Summing the constraints of $y$ at the sets in $\mathcal S$ gives
\begin{equation}\label{eq:count}
  r\,y(M)+y(P)+2\,y(R)\le r\nu.
\end{equation}
The edges of $P$ are counted only once in \Cref{eq:count}, so it remains to bound $y(P)$.

\begin{lemma}\label{lem:local}
Every $e\in M$ satisfies $y(P_e)\le1+(r-2)\,y_e$.
\end{lemma}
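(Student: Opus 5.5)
The plan is to use the two extremal properties of $M$ (maximum size $\nu$, and maximum $y(M)$ among such matchings) to pin down the structure of $P_e$, and then to read the bound off a few constraints of $y$.

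\textbf{Step 1: two consequences of the choice of $M$.} Fix $e\in M$ and $f\in P_e$. By definition $f$ shares $r-1$ vertices with $e$ and with no other edge of $M$. Hence $M-e+f$ is an $(r-1)$-matching of size $\nu$. Maximality of $y(M)$ then gives $y_f\le y_e$.

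Now take distinct $f,g\in P_e$. If $f$ and $g$ did not share $r-1$ vertices, then $M-e+f+g$ would be an $(r-1)$-matching of size $\nu+1$, which is impossible. So the family $\mathcal F=P_e\cup\{e\}$ consists of $r$-sets any two of which share exactly $r-1$ vertices.

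\textbf{Step 2: structure of such a family.} I will invoke the standard dichotomy: a family of $r$-sets pairwise meeting in $r-1$ vertices either has all members containing a common $(r-1)$-set, or is contained in a single $(r+1)$-set.

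To prove it, take two members $A,B$. Every other member $C$ either contains $A\cap B$ or satisfies $C\subseteq A\cup B$. Indeed, if $C\not\supseteq A\cap B$, then $C$ misses a vertex of $A\cap B$. To meet each of $A$ and $B$ in $r-1$ vertices, $C$ must then contain $A\setminus B$ and $B\setminus A$ and the rest of $A\cap B$, so $C\subseteq A\cup B$.

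A short check then rules out a mixture. Suppose $C\supseteq A\cap B$ with $C\not\subseteq A\cup B$, and $D\subseteq A\cup B$ with $D\not\supseteq A\cap B$. Then $|C\cap D|\le r-2$, a contradiction. I expect this case analysis to be the only fiddly part, and it is short.

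\textbf{Step 3a: the star case.} Suppose all of $\mathcal F$ contains a common $(r-1)$-set $S\subseteq e$. The constraint of $y$ at $S$ gives $y(P_e)+y_e\le1$. Hence $y(P_e)\le1\le1+(r-2)y_e$.

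\textbf{Step 3b: the $(r+1)$-set case.} Suppose $\mathcal F\subseteq\binom{T}{r}$ with $T=e\cup\{x\}$. Then every $f\in P_e$ has the form $f_v=T\setminus\{v\}$ for some $v\in e$, so $P_e$ has at most $r$ edges. Write $a_v=y_{f_v}$, setting $a_v=0$ if $f_v\notin P_e$.

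For distinct $v,w\in e$, the $(r-1)$-set $T\setminus\{v,w\}$ contains $x$, so it is not contained in $e$. It lies in $f_v$ and $f_w$, so its constraint gives $a_v+a_w\le1$.

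Let $a_u$ be the largest $a_v$. Each other $a_v$ is at most $\min(y_e,\,1-a_u)$. Summing,
\[
y(P_e)\le a_u+(1-a_u)+(r-2)\,y_e=1+(r-2)\,y_e.
\]
This bound uses $a_v\le1-a_u$ for one of the remaining $r-1$ edges and $a_v\le y_e$ (from Step 1) for the other $r-2$.

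The degenerate case $|P_e|\le1$ falls under Step 3a, taking $S=e\cap f$. This completes the plan.
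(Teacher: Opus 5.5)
Your proof is correct, and it takes a somewhat different route from the paper's. Step 1 is the same as the opening of the paper's proof; the two arguments differ in how they use the fact that edges of $P_e$ pairwise share $r-1$ vertices.

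\textbf{Your route.} You prove the full dichotomy for $P_e\cup\{e\}$: either all members contain a common $(r-1)$-set, or all lie in one $(r+1)$-set. This needs the extra check that rules out a mixture. You then treat the two cases separately:
\begin{itemize}
\item In the star case you get the stronger bound $y(P_e)\le 1-y_e$.
\item In the $(r+1)$-set case you pair two edges of $P_e$ through the constraint at $T\setminus\{v,w\}$, an $(r-1)$-set containing the new vertex $x$. You bound the remaining $r-2$ edges by $y_e$.
\end{itemize}

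\textbf{The paper's route.} It avoids the case split. It fixes one edge $e-u_0+v_0\in P_e$ and bounds all edges of $P_e$ through $e\setminus\{u_0\}$ by $1-y_e$, using the constraint at that set, which also contains $e$. It then observes that every other edge of $P_e$ must be $e-u+v_0$ for some $u\in e\setminus\{u_0\}$, each of weight at most $y_e$. Since $(1-y_e)+(r-1)\,y_e$ already equals the target, it never needs to know that the two kinds of edges cannot coexist. In effect it uses only the first half of your Step~2, with $A=e$ and $B=e-u_0+v_0$: every member either contains $A\cap B$ or lies in $A\cup B$.

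\textbf{Comparison.} Your argument yields a cleaner structural picture and a sharper bound in the star case. The paper's is shorter because it bounds both kinds of edges simultaneously instead of excluding one.
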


\begin{proof}
Each $f\in P_e$ shares $r-1$ vertices with no edge of $M\setminus\{e\}$, so $(M\setminus\{e\})\cup\{f\}$ is an $(r-1)$-matching of size $\nu$, and $y_f\le y_e$ by the choice of $M$. Similarly, any two edges of $P_e$ share $r-1$ vertices, as otherwise replacing $e$ by both would give an $(r-1)$-matching of size $\nu+1$. Write $e-u+v$ for $(e\setminus\{u\})\cup\{v\}$; every edge of $P_e$ has this form with $u\in e$ and $v\notin e$. If $P_e=\emptyset$, there is nothing to prove; otherwise fix $e-u_0+v_0\in P_e$. The edges $e-u_0+v$ of $P_e$ contain $e\setminus\{u_0\}$, as does $e$, so their total weight is at most $1-y_e$. Every other edge $e-u+v$ of $P_e$ has $u\ne u_0$, so it meets $e-u_0+v_0$ in the $r-2$ vertices of $e\setminus\{u,u_0\}$, and in $v$ if $v=v_0$. As any two edges of $P_e$ share $r-1$ vertices, $v=v_0$, so the other edges have the form $e-u+v_0$ with $u\in e\setminus\{u_0\}$, and each has weight at most $y_e$. Therefore $y(P_e)\le(1-y_e)+(r-1)\,y_e=1+(r-2)\,y_e$.
\end{proof}

Maximizing $y(M)$ is used only in \Cref{lem:local}, and it is needed there: in $K^{(r)}_{r+1}$ with $r\ge3$, let $y$ be $0$ on an edge $e$ and $\frac12$ elsewhere; then $M=\{e\}$ is a maximum $(r-1)$-matching, but $y(P_e)=\frac r2>1+(r-2)\,y_e$.
\begin{proof}[Proof of \Cref{thm:main}]
Summing \Cref{lem:local} over $e\in M$ gives $y(P)\le\nu+(r-2)\,y(M)$, and adding this to \Cref{eq:count} yields $2\,y(H)=2\,y(M)+2\,y(P)+2\,y(R)\le(r+1)\nu$. For $H=K^{(r)}_{r+1}$, the weight $\frac12$ on each of the $r+1$ edges is a fractional $(r-1)$-matching, as each $(r-1)$-set lies in at most two edges; since $\nu=1$, equality holds.
\end{proof}

\end{document}